\documentclass{amsart}

\usepackage[pdftex]{hyperref}

\hypersetup{ colorlinks=true, linkcolor=blue, filecolor=magenta, urlcolor=cyan, }

\hypersetup{
     colorlinks=true,       
    linkcolor=red,          
    citecolor=blue,        
    filecolor=cyan,         
    urlcolor=magenta        
}

\newcommand{\RR}{\mathbb{R}} 
\newcommand{\CC}{\mathbb{C}} 

\newcommand{\UB}{\mathbb{B}}

\newcommand{\RE}{\mathrm{Re}\,}

\newcommand{\Aut}{\mathrm{Aut}}


\newcommand{\Ric}{\mathrm{Ric}}


\newcommand{\paren}[1]{\left(#1\right)}
\newcommand{\abs}[1]{\left\lvert#1\right\rvert}
\newcommand{\norm}[1]{\left\|#1\right\|}
\newcommand{\set}[1]{\left\{#1\right\}}

\newcommand{\pd}[2]{\frac{\partial#1}{\partial#2}}

\newcommand{\pdl}[2]{\partial#1/\partial#2}

\newcommand{\ind}[4]{{#1}^{\phantom{#2}#3}_{#2\phantom{#3}#4}}

\newtheorem{theorem}{Theorem}[section]

\newtheorem{lemma}[theorem]{Lemma}
\newtheorem{proposition}[theorem]{Proposition}
\theoremstyle{definition}

\theoremstyle{remark}
\newtheorem{remark}[theorem]{Remark}

\newcommand{\KE}{K\"ahler-Einstein }

\def\im{i}
\def\KE{K\"ahler-Einstein }

\numberwithin{equation}{section}

\title[Existence of a complete holomorphic vector field]{Existence of a complete holomorphic vector field  via the K\"ahler-Einstein metric}

\author{Young-Jun Choi}
\address{Department of Mathematics, 
Pusan National University,
2, Busandaehak-ro 63beon-gil,
Geumjeong-gu, Busan, 46241, Republic of Korea}
\email{youngjun.choi@pusan.ac.kr}

\author{Kang-Hyurk Lee}
\address{Department of Mathematics and Research Institute of Natural Science, 
Gyeongsang National University, 
Jinju, Gyeongnam, 52828, 
The Republic of Korea}
\email{nyawoo@gnu.ac.kr}

\subjclass[2010]{32Q20, 32M05, 53C55}


\keywords{the K\"ahler-Einstein metric, complete holomorphic vector fields}

\thanks{The research of first and second named authors  was supported by the National Research Foundation of Korea (NRF) grant funded by the Korea government (No. 2018R1C1B3005963, No. NRF-2019R1F1A1060891).}

\begin{document}

\maketitle

\begin{abstract}
In this paper, we study the existence of a complete holomorphic vector fields on a strongly pseudoconvex complex manifold admitting a negatively curved complete K\"ahler-Einstein metric and a discrete sequence of automorphisms. Using the method of potential scaling, we will show that there is a potential function of the K\"ahler-Einstein metric whose differential has a constant length. Then we will construct a complete holomorphic vector field from the gradient vector field of the potential function.
\end{abstract}

\section{Introduction}


A fundamental problem in Several Complex Variables is to classify bounded pseudoconvex domains in the complex Euclidean space with a noncompact automorphism group, especially with a compact quotient. A typical result  is due to B.~Wong's theorem in \cite{Wong}: \textit{a bounded strongly pseudoconvex domain in $\CC^n$ with a noncompact automorphism group is biholomorphic to the unit ball $\UB^n=\set{z\in\CC^n:\norm{z}<1}$.} J.P.~Rosay~\cite{Rosay} generalized Wong's theorem: \textit{a bounded domain with an automorphism orbit accumulating at a strongly pseudoconvex boundary point is biholomorphically equivalent to the unit ball.} This implies that  the unit ball is the biholomorphically unique, smoothly bounded domain with a compact quotient.  There have been several generalizations  of the Wong-Rosay theorem. For instance, in Gaussier-Kim-Krantz \cite{GKK}, the unit ball is also characterized among  complex manifolds with strongly pseudoconvex boundary. We will consider in this paper the case of complex manifolds without boundary.

Another important work in this study is due to S.~Frankel~\cite{Frankel}: \textit{a bounded convex domain  with a compact quotient  is symmetric}. A key point in Frankel's work is to show the existence of an $1$-parameter family of automorphisms using the scaling method (for the scaling method, see \cite{KTK1990}).  In  his paper~\cite{LKH}, the second named author of this paper introduced a method of potential scaling for bounded pseudoconvex domains in the complex Euclidean spaces. This method is to rescale a canonical potential function of the \KE metric  by holomorphic automorphisms and then to construct a certain class of potential functions as a rescaling limit. If a rescaling limit satisfies a specified condition, there is  an $1$-parameter family of automorphisms. In this paper, we will generalize this method to a complex manifold with a negatively curved complete \KE metric.

\medskip

Let $X^n$ be a complex manifold of dimension $n$. The \emph{automorphism group} of $X$, denoted by $\Aut(X)$, is the set of self-biholomorphisms of $X$ under the law of the mapping composition.  Throughout this paper, the \emph{negatively curved complete \KE metric} (simply \emph{\KE metric}) of $X$ means a complete \KE metric of $X$ with Ricci curvature $-(n+1)$, equivalently, a complete K\"ahler metric $\omega$ of $X$ with the normalized Einstein condition
\begin{equation*}
\Ric_\omega = -(n+1)\omega \;.
\end{equation*}
In a remarkable work by Yau~\cite{Yau1978}, every compact complex manifold with a negative anticanonical class admits a negatively curved complete \KE metric. By Cheng-Yau~\cite{Cheng-Yau} and Mok-Yau~\cite{Mok-Yau}, a bounded domain in $\CC^n$ admits a \KE metric if and only if the domain is pseudoconvex.

In case of bounded pseudoconvex domains, the \KE metric has a global potential function. Let $X=\Omega$ be  a bounded pseudoconvex domain in $\CC^n$ with the Euclidean coordinates $z=(z^1,\ldots,z^n)$. The negatively curved complete \KE metric $\omega=\im\sum g_{\alpha\bar\beta}dz^\alpha\wedge dz^{\bar\beta}$ of $\Omega$ has the canonical potential function $\log\det(g_{\alpha\bar\beta})$ in the sense that
\begin{equation*}
\im\partial\bar\partial\log\det(g_{\alpha\bar\beta}) = (n+1)\omega \;.
\end{equation*}
since $\Ric_\omega=-\im\partial\bar\partial\log\det(g_{\alpha\bar\beta})$. If $\Omega$ is strongly pseudoconvex, as shown in Proposition~\ref{prop:boundary behavior} in this paper, the length of $\partial\log\det(g_{\alpha\bar\beta})$ with respect to $\omega$ is continuous up to the boundary:
\begin{equation*}
\lim_{p\to\partial\Omega}\norm{\partial\log\det(g_{\alpha\bar\beta})}_\omega^2(p) = (n+1)^2 \;.
\end{equation*}
If a complex manifold $X^n$ without boundary can be biholomorphically imbedded in $\CC^n$ as a strongly pseudoconvex domain, then this noncompact $X$ also admits a negatively curved complete \KE metric $\omega$ and a global potential function $\varphi:X\to\RR$ with 
\begin{equation*}
\lim_{p\to\infty}\norm{\partial\varphi}_\omega^2(p) = (n+1)^2 \;.
\end{equation*}
Here $p\to\infty$ means that $p$ tends to a point of infinity in the one-point compactification of $X$. More precisely, a sequnce $\{p_j\}$ in a manifold $X$ \emph{converges to the point at infinity}, denoted by $p_n\to\infty$ if for any compact subset $K$ in $X$,  $p_j\in X\setminus K$ for sufficiently large $j$.

The main result of this paper is to show the existence of an $1$-paremeter family of automorphisms in this setting.

\begin{theorem}\label{thm:main thm}
Let $X^n$ be a noncompact complex manifold with the complete K\"ahler $\omega$ with Ricci curvature $-(n+1)$.
Suppose that there exists a global potential function $\varphi:X\to\RR$ in the sense of
\begin{equation*}
\im\partial\bar\partial\varphi = (n+1)\omega
\end{equation*}
If
\begin{enumerate}
\item for any sequence $\{p_j\}$ in $X$ converging to the point at infinity,
\begin{equation*}
\lim_{j\to\infty} \norm{\partial\varphi}_\omega (p_j)=n+1 \; ;
\end{equation*}
\item there are a sequence of automorphisms $\{f_j\}$ and a point $p_0\in X$ such that $f_j(p_0)\to\infty$,
\end{enumerate}
then $X$ admits a nowhere vanishing complete holomorphic vector field.
\end{theorem}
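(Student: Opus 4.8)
The plan is to realize the \emph{method of potential scaling}: rescale $\varphi$ by the given automorphisms, pass to a limiting potential, verify that this limit has constant differential length, and finally read off the vector field from it.

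\textbf{Scaling and normalization.} Since $\omega$ is a complete \KE metric with $\Ric_\omega=-(n+1)\omega$, the uniqueness of such a metric forces every automorphism to be an isometry, $f_j^*\omega=\omega$ (indeed $\Ric_{f_j^*\omega}=f_j^*\Ric_\omega=-(n+1)f_j^*\omega$). Hence each $\varphi\circ f_j$ is again a global potential, $\im\partial\bar\partial(\varphi\circ f_j)=(n+1)f_j^*\omega=(n+1)\omega$. I normalize $\psi_j:=\varphi\circ f_j-\varphi(f_j(p_0))$ so that $\psi_j(p_0)=0$, and record that, $f_j$ being a holomorphic isometry, $\norm{\partial\psi_j}_\omega(p)=\norm{\partial\varphi}_\omega(f_j(p))$ for every $p$. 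Because $f_j$ is an isometry, the $\omega$-distance $d_\omega(f_j(p),f_j(p_0))=d_\omega(p,p_0)$ is bounded in $j$; as $f_j(p_0)\to\infty$ and $\omega$ is complete, Hopf--Rinow then gives $f_j(p)\to\infty$ for \emph{every} fixed $p\in X$. With hypothesis (1) this yields $\norm{\partial\psi_j}_\omega(p)\to n+1$ pointwise on $X$.

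\textbf{Passing to the limit (the main obstacle).} By hypothesis (1) and continuity, $M:=\sup_X\norm{\partial\varphi}_\omega<\infty$, so the family satisfies the uniform bound $\norm{\partial\psi_j}_\omega\le M$; together with $\psi_j(p_0)=0$ this makes $\{\psi_j\}$ locally uniformly bounded (uniformly Lipschitz for the $\omega$-distance). All $\psi_j$ solve the \emph{same} linear equation $\im\partial\bar\partial\psi_j=(n+1)\omega$ relative to the fixed background $\omega$, so elliptic regularity produces uniform $C^k_{\mathrm{loc}}$ estimates and a subsequence converging in $C^\infty_{\mathrm{loc}}$ to a smooth $\psi$ with $\im\partial\bar\partial\psi=(n+1)\omega$. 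I expect this compactness argument---securing uniform interior estimates for the rescaled potentials and confirming that the limit is a genuine global potential---to be the hardest part. From the $C^1_{\mathrm{loc}}$ convergence and the pointwise limit above, $\norm{\partial\psi}_\omega(p)=\lim_j\norm{\partial\psi_j}_\omega(p)=n+1$ for all $p$, so $\norm{\partial\psi}_\omega\equiv n+1$; notably no maximum principle is needed, as the isometry property transports the behavior at infinity to every interior point.

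\textbf{Constructing the field.} Set $F:=e^{\psi/(n+1)}$. Differentiating $\norm{\partial\psi}_\omega^2\equiv(n+1)^2$ once gives the contraction $\psi_{\alpha\beta}\,\xi^\alpha=-(n+1)\psi_\beta$, where $\xi^\alpha=g^{\alpha\bar\beta}\psi_{\bar\beta}$ and $\psi_{\alpha\beta}$ is the $(2,0)$ covariant Hessian; a Bochner identity for $\norm{\partial\psi}_\omega^2$ (using $\Ric_\omega=-(n+1)\omega$ together with $\psi_{\alpha\bar\beta}=(n+1)g_{\alpha\bar\beta}$), whose left-hand side vanishes by constancy, yields $\norm{\nabla\partial\psi}_\omega^2=(n+1)^2$. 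Feeding both into the squared norm of $\psi_{\alpha\beta}+\tfrac{1}{n+1}\psi_\alpha\psi_\beta$ shows that this quantity is zero, forcing the rank-one identity $\psi_{\alpha\beta}=-\tfrac{1}{n+1}\psi_\alpha\psi_\beta$. A direct computation then gives $F_{\alpha\beta}=\tfrac{F}{n+1}\big(\tfrac{1}{n+1}\psi_\alpha\psi_\beta+\psi_{\alpha\beta}\big)=0$, i.e.\ the $(2,0)$ covariant Hessian of $F$ vanishes. This is precisely the condition that $\mathrm{grad}^{1,0}F$ be a holomorphic vector field and that $J\nabla F$ be a Killing field. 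Since $\norm{\partial F}_\omega=\tfrac{F}{n+1}\norm{\partial\psi}_\omega=F>0$, this field never vanishes; being Killing for the complete metric $\omega$, its flow is a one-parameter group of automorphisms, hence the field is complete. This produces the desired nowhere-vanishing complete holomorphic vector field.
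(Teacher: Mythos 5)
Your proposal is correct and follows the paper's overall architecture (potential scaling $\to$ a potential with $\norm{\partial\tilde\varphi}_\omega\equiv n+1$ $\to$ a holomorphic field built from its gradient), but both key technical steps are implemented by genuinely different means. For the compactness step, the paper sets $\psi=e^\varphi$, proves two-sided bounds on $\psi\circ f_j/(\psi\circ f_j)(p_0)$ via Gronwall, observes that the ratio against $\psi$ has pluriharmonic logarithm, writes it locally as $\abs{\eta_j}^2$ for holomorphic $\eta_j$, and concludes by Montel and Hurwitz; you instead work additively with $\varphi\circ f_j-\varphi(f_j(p_0))$, get local uniform bounds from the global Lipschitz estimate $\norm{\partial\varphi}_\omega\le M$, and invoke linear elliptic regularity for the fixed equation $\Delta_\omega\psi_j=n(n+1)$. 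Your route is more standard PDE and avoids the Hurwitz step (which the paper needs only because it works multiplicatively); both rest on the same unproved-in-the-paper fact that automorphisms are isometries of $\omega$. For the second step, the paper verifies holomorphy by expanding $\norm{\nabla''V}_\omega^2$ directly and proves completeness by a reparametrization lemma for $e^{\varphi/(n+1)}\RE W$ along level sets; you extract the stronger pointwise identity $\varphi_{;\alpha\beta}=-\tfrac{1}{n+1}\varphi_\alpha\varphi_\beta$, hence that $F=e^{\varphi/(n+1)}$ has vanishing $(2,0)$-Hessian, i.e.\ is a Killing potential, and then quote the completeness of Killing fields on complete manifolds. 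This is the same computation repackaged, but the Killing-potential framing is conceptually cleaner and explains why the flow consists of isometries. One small correction: the complete holomorphic field you must output is $\im\,\mathrm{grad}^{1,0}F$ (as in the paper's formula \eqref{eqn:infinitesimal generator}), not $\mathrm{grad}^{1,0}F$ itself --- the real part of the latter is the gradient flow of $F$, and since $\norm{\partial F}_\omega=F$ that flow satisfies $\dot F=\mathrm{const}\cdot F^2$ and escapes in finite time; only after multiplying by $\im$ does the real part become the Killing field $\pm J\nabla F$ whose completeness you correctly argue.
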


A holomorphic vector field $V$ is a holomorphic section to the $(1,0)$-tangent bundle of $X$. If a holomorphic vector field $V$ is complete, equivalently its real part $\RE V=V+\overline{V}$ is complete, then the flow of $\RE V$ is an $1$-parameter family of holomorphic transformations of $X$.

\medskip

In Section~\ref{sec:convergence}, we introduce the method of potential scaling and prove that there is  a global potential function $\tilde\varphi:X\to\RR$ satisfying $\norm{\partial\tilde\varphi}^2_\omega\equiv(n+1)^2$ (Theorem~\ref{thm:main thm1}). Then we will prove that there is complete holomorphic vector field $V$ tangent to $\tilde\varphi$ in Section~\ref{sec:existence} (Theorem~\ref{thm:main thm2}). In the last section, we will discuss a boundary behavior of canonical potential functions in strongly pseudoconvex domains.


\section{Convergence of K\"ahler potentials}\label{sec:convergence}
In this section, we will introduce the method of potential scaling as in \cite{LKH} and will prove that the manifold $X$ in Theorem~\ref{thm:main thm} admits a global potential function $\tilde\varphi$ such that the length of $\partial\tilde\varphi$  is constant.

\medskip

Let $X^n$ be a $n$-dimensional complex manifold with the complete K\"ahler $\omega$ with Ricci curvature $-(n+1)$ and let $\varphi:X\to\RR$ be a global K\"ahler potential  of $\omega$ in the sense of
\begin{equation*}
\im\partial\bar\partial\varphi = (n+1)\omega \;.
\end{equation*}
Since every holomorphic automorphism $f\in\Aut(X)$ is an isometry of $\omega$,
\begin{equation}\label{eqn:pulling-back potential}
\im\partial\bar\partial f^*\varphi 
	= f^* \im\partial\bar\partial\varphi 
	= (n+1)f^*\omega
	= (n+1)\omega \;,
\end{equation}
so each pulling-back $f^*\varphi = \varphi\circ f$ is  also a potential function. The method of potential scaling is to construct a certain potential function as a limit of sequence of potential functions
\begin{equation}\label{eqn:potential scaling}
\varphi\circ f_j - (\varphi\circ f_j)(p_0)
\end{equation}
for some $f_j\in\Aut(X)$ and $p_0\in X$. We will mainly consider the convergence of the sequence.

 When we define 
\begin{equation*}
\psi=\exp\varphi\;, \quad\text{equivalently,}\quad \varphi=\log\psi 
\end{equation*} 
for convenience, we can write \eqref{eqn:potential scaling} by
\begin{equation*}
\varphi\circ f_j - (\varphi\circ f_j)(p_0) = \log\frac{\psi\circ f_j}{(\psi\circ f_j)(p_0)}
\end{equation*}
so it is sufficient to consider the convergence of $\psi\circ f_j/(\psi\circ f_j)(p_0)$. For the convergence of the sequence, we need the following estimates.

\begin{lemma}\label{lem:basic estimate}
Suppose that there is a constant $C>0$ with
\begin{equation}\label{eqn:boundedness}
\norm{\partial\varphi}_\omega < C \quad\text{on $X$.}
\end{equation}
For any compact subset $K\subset X$ and a point $p_0\in X$, there exists a constant $A=A(K,p)>0$ such that
\begin{equation*}
\frac{1}{A} < \frac{\psi\circ f}{(\psi\circ f)(p_0)} <A \quad\text{on $K$}
\end{equation*}
for any $f\in\Aut(X)$.
\end{lemma}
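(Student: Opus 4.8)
The plan is to observe that the quantity to be bounded is simply the exponential of a difference of values of a potential function, and then to control that difference by a uniform Lipschitz estimate coming from the gradient bound \eqref{eqn:boundedness}. Writing
\begin{equation*}
\frac{(\psi\circ f)(p)}{(\psi\circ f)(p_0)} = \exp\paren{(\varphi\circ f)(p) - (\varphi\circ f)(p_0)},
\end{equation*}
I see that the desired two-sided estimate on $K$ is equivalent to a bound $\abs{(\varphi\circ f)(p)-(\varphi\circ f)(p_0)}\le \log A$ holding for all $p\in K$ and all $f\in\Aut(X)$.

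The key point is that the hypothesis \eqref{eqn:boundedness} passes to every pulled-back potential $\varphi\circ f$ with the \emph{same} constant $C$, independently of $f$. Indeed, since $f$ is holomorphic we have $\partial(\varphi\circ f)=f^*(\partial\varphi)$, and since $f$ is an isometry with $f^*\omega=\omega$ it follows that $\norm{\partial(\varphi\circ f)}_\omega(p)=\norm{\partial\varphi}_\omega(f(p))<C$ for every $p\in X$. Thus the whole family $\set{\varphi\circ f:f\in\Aut(X)}$ is uniformly gradient-bounded.

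Next I would convert this gradient bound into a Lipschitz estimate for the Riemannian distance $d_\omega$ induced by $\omega$. For a real-valued function $u$, decomposing a real tangent vector as $v=\xi+\bar\xi$ with $\xi$ of type $(1,0)$ gives $du(v)=2\RE\partial u(\xi)$, so Cauchy--Schwarz yields $\abs{du(v)}\le \sqrt2\,\norm{\partial u}_\omega\,\norm{v}_\omega$. Integrating along a path joining $p_0$ to $p$ and taking the infimum over such paths, I obtain $\abs{u(p)-u(p_0)}\le \sqrt2\,\paren{\sup_X\norm{\partial u}_\omega}\,d_\omega(p_0,p)$. Applied to $u=\varphi\circ f$ this reads $\abs{(\varphi\circ f)(p)-(\varphi\circ f)(p_0)}\le \sqrt2\,C\,d_\omega(p_0,p)$, uniformly in $f$.

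Finally, compactness closes the argument: since $X$ is connected and $K$ is compact, $R:=\sup_{p\in K}d_\omega(p_0,p)$ is finite, so taking $A=\exp(\sqrt2\,C\,R)$, enlarged slightly to make the inequalities strict, gives the claim. The only genuinely nontrivial step is the passage from the Hermitian gradient bound to a uniform Lipschitz bound in $d_\omega$; everything else is bookkeeping. The precise constant relating $\norm{\partial u}_\omega$ to the length of the real differential $du$ (here $\sqrt2$) depends on the normalization convention for $\norm{\cdot}_\omega$, so I would first pin that convention down, although it does not affect the existence of a uniform $A$.
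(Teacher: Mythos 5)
Your proof is correct and follows essentially the same route as the paper: both exploit that the gradient bound \eqref{eqn:boundedness} is preserved under pullback by isometric automorphisms and then integrate along paths from $p_0$, using compactness of $K$ to bound $d_\omega(p_0,\cdot)$. The only cosmetic difference is that you take logarithms first and integrate the uniformly bounded differential of $\varphi\circ f$, whereas the paper works with $\psi\circ f$ directly and phrases the same integration as Gronwall's inequality.
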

\begin{proof}
The automorphism $f$ is isometric with respect to $\omega$ so that $\norm{\partial(\psi\circ f)}_\omega^2
		=\norm{\partial\psi}_\omega^2\circ f$.
Since $\psi$ is nowhere vanishing on $X$, we have $\partial\psi=\psi (\partial\log\psi)=\psi\partial\varphi$; hence
\begin{equation}\label{eqn:basic inequality}
\norm{\partial(\psi\circ f)}_\omega^2
	= \norm{\partial\psi}_\omega^2\circ f
	= (\psi\circ f)^2 \paren{\norm{\partial\varphi}_\omega^2\circ f} <C^2 (\psi\circ f)^2
\end{equation}
by \eqref{eqn:boundedness}.
When we let 
\begin{equation*}
\sigma_{f}=\frac{\psi\circ f}{(\psi\circ f)(p_0)}
\end{equation*}
for the convenience, the inequality \eqref{eqn:basic inequality}  implies that
\begin{equation*}
\norm{\partial\sigma_{f}}_\omega^2
	= \frac{1}{(\psi\circ f)^2(p_0)}\norm{\partial(\psi\circ f)}_\omega^2
	\leq  C^2(\sigma_{f})^2 
	\;.
\end{equation*}
For a unit speed curve $\gamma:(-R,R)\to\Omega$ with respect to $\omega$ with $\gamma(0)=p_0$, this inequality can be written by $\abs{(\sigma_{f}\circ\gamma)'(t)}\leq C\abs{(\sigma_{f}\circ\gamma)(t)}$. Since $\sigma_{f}(p_0)=1$, Gronwall's inequality gives
\begin{equation*}
e^{-C t}\leq\abs{(\sigma_{f}\circ\gamma)(t)}\leq e^{Ct} \;.
\end{equation*}
As a conclusion, we have that for a point $p\in X$ with $d_\omega(p_0,p)<R$ where $d_\omega$ is the distance associated to $\omega$, we get
\begin{equation*}
e^{-CR}\leq\sigma_{f}(p)\leq e^{CR} \;.
\end{equation*}
This is independent of the choice of $f\in\Aut(X)$. This completes the proof.
\end{proof}

Then we have the convergence of the potential scaling.
\begin{lemma}\label{lem:precompact}
Assume \eqref{eqn:boundedness}. Then for any sequence $\set{f_j}$ of automorphisms of $X$ and a point $p_0\in X$, the sequence of potentials 
\begin{equation*}
\set{ \log\frac{\psi\circ f_j}{(\psi\circ f_j)(p_0)}}
\end{equation*}
has a convergent subsequence in the local $C^\infty$ topology, so a limit is also a potential function.
\end{lemma}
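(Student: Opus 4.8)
The plan is to observe that each rescaled potential
$u_j := \log\frac{\psi\circ f_j}{(\psi\circ f_j)(p_0)} = \varphi\circ f_j - (\varphi\circ f_j)(p_0)$
solves one and the \emph{same} linear elliptic equation, and then to extract a smooth limit by standard interior regularity together with the bound from Lemma~\ref{lem:basic estimate}. The crucial structural point is already contained in \eqref{eqn:pulling-back potential}: since every $f_j$ is an isometry of $\omega$, each pulled-back potential satisfies $\im\partial\bar\partial(\varphi\circ f_j) = (n+1)\omega$, and subtracting the constant $(\varphi\circ f_j)(p_0)$ does not affect this, so $\im\partial\bar\partial u_j = (n+1)\omega$ for every $j$. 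In particular the source term $(n+1)\omega$ is independent of $j$, which is exactly what makes the estimates uniform.

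First I would record the two ingredients needed to run elliptic estimates. The equation $\im\partial\bar\partial u_j = (n+1)\omega$ is \emph{linear} in $u_j$ with fixed right-hand side; taking the trace with respect to $\omega$ it becomes $\Delta_\omega u_j = n(n+1)$, a fixed second-order elliptic equation on $X$ with constant source. For the zeroth-order control, note that $\sigma_{f_j}=\exp(u_j)$, so the estimate $1/A < \sigma_{f_j} < A$ on a compact set $K$ provided by Lemma~\ref{lem:basic estimate} translates at once into $\abs{u_j} < \log A$ on $K$. Thus the family $\set{u_j}$ is uniformly bounded on every compact subset, with bounds that do not depend on the automorphism.

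Next I would apply interior Schauder (or $L^p$) estimates for the fixed operator $\Delta_\omega$ on coordinate balls. Because the source is a fixed smooth function and the $u_j$ are uniformly bounded in $L^\infty$ on a slightly larger ball, interior estimates yield a uniform $C^{2,\alpha}$ bound on the smaller ball; differentiating the equation and bootstrapping (the coefficients and source are smooth) then gives uniform $C^{k,\alpha}_{\mathrm{loc}}$ bounds for every $k$. Covering $X$ by countably many such balls and combining Arzel\`a--Ascoli with a diagonal argument produces a subsequence of $\set{u_j}$ converging in the local $C^\infty$ topology. Finally, since the convergence is $C^\infty_{\mathrm{loc}}$, I may pass $\im\partial\bar\partial$ through the limit, so the limit $\tilde\varphi$ satisfies $\im\partial\bar\partial\tilde\varphi=(n+1)\omega$ and is again a potential function.

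I do not expect a genuine obstacle: the argument is soft once linearity and the $j$-independence of the source are noticed, so there is no nonlinear Monge--Amp\`ere difficulty to confront. The only points requiring care are precisely the two that make the compactness \emph{uniform} in $j$ --- that the equation is identical for all $j$ (a consequence of the isometry property rather than of any particular $f_j$), and that the local $L^\infty$ bound of Lemma~\ref{lem:basic estimate} is independent of $f\in\Aut(X)$. Everything beyond that is the standard interior regularity and normal-families argument for a fixed linear elliptic equation.
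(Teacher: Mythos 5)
Your proof is correct, but it takes a genuinely different route from the paper's. You work directly with $u_j=\varphi\circ f_j-(\varphi\circ f_j)(p_0)$, note that all of them solve the \emph{same} linear elliptic equation $\im\partial\bar\partial u_j=(n+1)\omega$ (hence $\Delta_\omega u_j=n(n+1)$), convert the bound of Lemma~\ref{lem:basic estimate} into a uniform local $L^\infty$ bound $\abs{u_j}<\log A$, and then run interior Schauder estimates, bootstrapping, and Arzel\`a--Ascoli with a diagonal argument. The paper instead avoids elliptic PDE theory altogether: it forms the quotient $Q_j=\dfrac{\psi\circ f_j}{(\psi\circ f_j)(p_0)}\cdot\dfrac{\psi(p_0)}{\psi}$, observes that $\im\partial\bar\partial\log Q_j=0$ because the two potentials cancel, so $\log Q_j$ is pluriharmonic and locally $Q_j=\abs{\eta_j}^2$ for holomorphic $\eta_j$; the uniform two-sided bound on $Q_j$ makes $\set{\eta_j}$ a normal family, Montel gives the convergent subsequence, and Hurwitz gives the nonvanishing of the limit. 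Each approach buys something: the paper's is ``soft'' one-variable complex analysis (no Schauder machinery, and the $C^\infty$ convergence is automatic from uniform convergence of holomorphic functions), while yours is more robust and would apply verbatim to any family of solutions of a fixed linear elliptic equation with uniform local $L^\infty$ bounds, without needing the pluriharmonicity trick. Both arguments use Lemma~\ref{lem:basic estimate} in the same way and both pass $\im\partial\bar\partial$ through the $C^\infty_{\mathrm{loc}}$ limit at the end, so your conclusion matches the paper's.
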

\begin{proof}
Let $Q_j:X\to\RR$ be a positively-valued function defined by
\begin{equation*}
Q_j
	=\frac{\psi\circ f_j}{(\psi\circ f_j)(p_0)}\frac{\psi(p_0)}{\psi} \;.
\end{equation*}
For each compact subset $K$ of $X$, Lemma~\ref{lem:basic estimate} implies that there is a constant $A_K>0$ such that
\begin{equation*}
\frac{1}{A_K}\frac{\psi(p_0)}{\psi} 
	< Q_j 
	<A_K\frac{\psi(p_0)}{\psi}
\end{equation*}
for any $j$. Moreover for the positive constant $B_K=\sup_K \psi$ and $C_K=\inf_K \psi$, we have the uniform estimate
\begin{equation}\label{eqn:second estimate}
0
	<\frac{1}{A_K}\frac{\psi(p_0)}{B_K} 
	< Q_j <A_K\frac{\psi(p_0)}{C_K} 
	\quad\text{on $K$.}
\end{equation}

\medskip

Now we consider the convergence of $\set{Q_j}$. Equation~\eqref{eqn:pulling-back potential} can be written by $\im\partial\bar\partial\log (\psi\circ f_j )=(n+1)\omega$ so that
\begin{equation*}
\im\partial\bar\partial \log Q_j 
	= \im\partial\bar\partial \log (\psi\circ f_j) -\im\partial\bar\partial\log \psi 
		=0 \;.
\end{equation*}
This means $\log Q_j$ is pluriharmonic. By the $\bar\partial$-Poincar\'e Lemma, each $Q_j$ is locally an absolute square of a holomorphic function.  For a small coordinate neighborhood $\mathcal{U}$, we can take a holomorphic function $\eta_j:\mathcal{U}\to\CC$ with $Q_j=\abs{\eta_j}^2$ on $\mathcal{U}$. From Inequality~\eqref{eqn:second estimate}, we may assume that $\abs{\eta_j}$ is pinched by two positive constants on $\mathcal{U}$ so   $\set{\eta_j}$ is has a convergent subsequence in the uniform convergence.  Hurwitz's theorem also says that the limit is nowhere vanishing on $\mathcal{U}$. Since $(X,\omega)$ is complete, we can consider exhaustion by compact subsets of $X$ so the diagonal procedure make us to $\set{Q_j}$ converges subsequentially to a nowhere vanishing function $\widetilde Q:X\to\RR$ in the local $C^\infty$ topology.


As a conclusion, we have a subsequential limit
\begin{equation*}
\frac{\psi\circ f_j}{(\psi\circ f_j)(p_0)} 
	= \frac{\psi}{\psi(p_0)} Q_j 
	\to \frac{\psi}{\psi(p_0)} \widetilde Q
\end{equation*}
in the local $C^\infty$ topology. Since $\im\partial\bar\partial\log\widetilde Q = 0$, we have
\begin{equation*}
\im\partial\bar\partial\tilde\varphi 
	= \im\partial\bar\partial\varphi = (n+1)\omega
\end{equation*}
where 
\begin{equation*}
\tilde\varphi 
	= \log \frac{\psi}{\psi(p)} \widetilde Q
	\;.
\end{equation*}
This completes the proof.
\end{proof}

Now we suppose that there is a constant $C_0>0$ suth that the value of $\norm{\partial\varphi}_\omega$ at the point at infinity is always $C_0$:
\begin{equation*}
\lim_{p\to\infty} \norm{\partial\varphi}_\omega^2(p)=C_0
\end{equation*}
Then the length of $\partial\varphi$ is globally bounded: $\norm{\partial\varphi}_\omega<C$ on $X$ for some $C>0$.
If there is a sequence of automorphisms $\set{f_j}$ and a point $p_0\in X$ such that $f_j(p_0)\to\infty$, Lemma~\ref{lem:precompact} makes us to get a potential rescaling limit
\begin{equation*}
\varphi\circ f_j - (\varphi\circ f_j)(p_0) \to \tilde\varphi
\end{equation*}
where $\tilde\varphi$ satisfies $\im\partial\bar\partial\tilde\varphi = (n+1)\omega$. For any $p\in X$, we have $f_j(p)\to\infty$ by the completeness of $X$ and
\begin{multline*}
\norm{\partial\tilde\varphi}_\omega(p) 
	=\lim_{j\to\infty} \norm{\partial\paren{\varphi\circ f_j - (\varphi\circ f_j)(p_0)}}_\omega(p) 
	\\
	= \lim_{j\to\infty}\norm{\partial(\varphi\circ f_j) }_\omega(p) 
	=\lim_{j\to\infty} \norm{\partial\varphi }_\omega(f_j(p)) 
\end{multline*}
by the convergence of the potential scaling. This means that $\norm{\partial\tilde\varphi}_\omega\equiv C_0$.

\medskip

As a conclusion, we have
\begin{theorem}\label{thm:main thm1}
Let $X^n$ be a noncompact complex manifold with the complete K\"ahler $\omega$ with Ricci curvature $-(n+1)$.
Suppose that there exists $\varphi:X\to\RR$ such that $\im\partial\bar\partial\varphi = (n+1)\omega$ and 
\begin{equation*}
\lim_{p\to\infty} \norm{\partial\varphi}_\omega(p) =C_0 \;.
\end{equation*}
for some constant $C_0$. If $f_j(p_0)\to\infty$ for some $f_j\in\Aut(X)$ and $p_0\in X$, then there is $\tilde\varphi:X\to\RR$ with 
\begin{equation*}
\im\partial\bar\partial\tilde\varphi = (n+1)\omega\quad\text{and}\quad \norm{\partial\tilde\varphi}_\omega \equiv C_0\;.
\end{equation*}
\end{theorem}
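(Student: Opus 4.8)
The plan is to reduce the statement to the convergence result of Lemma~\ref{lem:precompact} and then to evaluate the length of the gradient of the resulting limit potential. First I would verify the boundedness hypothesis \eqref{eqn:boundedness} required to invoke Lemma~\ref{lem:precompact}: the function $\norm{\partial\varphi}_\omega$ is continuous on $X$ and tends to $C_0$ as $p\to\infty$, hence it is bounded outside some compact set, and bounded on that compact set by continuity; therefore $\norm{\partial\varphi}_\omega < C$ on all of $X$ for some constant $C>0$.

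With \eqref{eqn:boundedness} established, I would apply Lemma~\ref{lem:precompact} to the sequence $\set{f_j}$ and the base point $p_0$. Passing to a subsequence, this yields a limit
\[
\tilde\varphi = \lim_{j\to\infty}\paren{\varphi\circ f_j - (\varphi\circ f_j)(p_0)}
\]
in the local $C^\infty$ topology, and the lemma already guarantees $\im\partial\bar\partial\tilde\varphi = (n+1)\omega$. It then remains only to identify the length $\norm{\partial\tilde\varphi}_\omega$.

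The decisive observation is that every orbit point escapes to infinity simultaneously with $f_j(p_0)$. Fix any $p\in X$. Since each $f_j$ is an isometry of $\omega$, the distance $d_\omega(f_j(p),f_j(p_0)) = d_\omega(p,p_0)$ is independent of $j$; as $\omega$ is complete and $f_j(p_0)\to\infty$, this forces $f_j(p)\to\infty$ as well. I would then use the $C^\infty$ convergence to interchange the limit with differentiation and with the pointwise bundle norm; the additive constant $(\varphi\circ f_j)(p_0)$ is annihilated by $\partial$, so
\[
\norm{\partial\tilde\varphi}_\omega(p) = \lim_{j\to\infty}\norm{\partial(\varphi\circ f_j)}_\omega(p) = \lim_{j\to\infty}\norm{\partial\varphi}_\omega(f_j(p)) = C_0 ,
\]
where the middle equality is the isometry invariance of the gradient length under $f_j$ and the final equality is the hypothesis applied along $f_j(p)\to\infty$. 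Since $p$ was arbitrary, $\norm{\partial\tilde\varphi}_\omega\equiv C_0$.

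I expect the only genuinely delicate point to be the justification that $f_j(p)\to\infty$ for \emph{every} $p$, not merely for $p=p_0$; this is precisely where completeness of $\omega$ together with the isometry property of the $f_j$ are both indispensable, since without completeness two points at bounded distance need not escape to infinity together. By contrast, passing the limit through $\partial$ and through the pointwise norm is routine once the convergence is known to be $C^\infty$ on compact sets, because then the first derivatives converge uniformly on a neighborhood of $p$ and the bundle metric $\norm{\cdot}_\omega$ is continuous in its argument.
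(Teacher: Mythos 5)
Your proposal is correct and follows essentially the same route as the paper: establish global boundedness of $\norm{\partial\varphi}_\omega$ from the limit at infinity, invoke Lemma~\ref{lem:precompact} to extract the rescaled limit potential $\tilde\varphi$, and then evaluate $\norm{\partial\tilde\varphi}_\omega(p)$ as $\lim_j\norm{\partial\varphi}_\omega(f_j(p))=C_0$ using that $f_j(p)\to\infty$ for every $p$. Your isometry-plus-completeness argument for that last escape-to-infinity claim is exactly the justification the paper leaves implicit in the phrase ``by the completeness of $X$.''
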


This theorem implies that the space $X$ in Theorem~\ref{thm:main thm} admits a global potential function $\tilde\varphi$ such that $\norm{\partial\tilde\varphi}_\omega \equiv n+1$.


\section{Existence of complete holomorphic vector fields}\label{sec:existence}
In this section, we will study the existence of complete holomorphic vector fields on a negatively curved complete \KE manifold.

\medskip

Let $X^n$ be a $n$-dimensional complex manifold with the complete K\"ahler $\omega$ with Ricci curvature $-(n+1)$ and suppose that $\im\partial\bar\partial\varphi=(n+1)\omega$ on $X$. 
In a local coordinate function $z=(z^1,\ldots,z^n)$, we can write
\begin{equation*}
\omega=\im g_{\alpha\bar\beta} dz^\alpha\wedge dz^{\bar\beta} \;.
\end{equation*}
We denote the complex conjugate of a tensor by taking the bar on the indices, that is, $\overline{z^\alpha} = z^{\bar\alpha}$, $\overline{g_{\alpha\bar\beta}} = g_{\bar\alpha\beta}$ and so on. We will also use the matrix of the  \KE metric $(g_{\alpha\bar\beta})$ and its inverse matrix $(g^{\bar\beta\alpha})$ to raise and lower indices: $\theta_\alpha=g_{\alpha\bar\beta}\theta^{\bar\beta}$, $\ind{R}{\beta}{\alpha}{\mu\bar\nu}=g^{\bar\gamma\alpha}R_{\beta\bar\gamma\mu\bar\nu}$. The Greek index $\alpha,\beta,\ldots$ runs from $1$ to $n$ and the summation convention for duplicated indices is always assumed.

Then $\norm{\partial\varphi}_\omega^2$ can be written by
\begin{equation*}
\norm{\partial\varphi}_\omega^2 
	= \norm{\varphi_\alpha dz^\alpha}_\omega^2 
	=\varphi_\alpha \varphi_{\bar\beta} g^{\alpha\bar\beta} = \varphi_\alpha\varphi^{\alpha}
\end{equation*}
where $\varphi_\alpha=\pdl{\varphi}{z^\alpha}$ and $\varphi_{\bar\beta}=\pdl{\varphi}{z^{\bar\beta}}$.

We denote the K\"ahler connection with respect to $\omega$ by $\nabla$ and denote the covariant derivative with repsect to $\pd{}{z^\alpha}$ and $\pd{}{z^{\bar\beta}}$ by $\nabla_\alpha, \nabla_{\bar\beta}$, or we will use the semi-colon notation.

\medskip

In order to show the existence of complete holomorphic vector fields, we need the following PDE equation.

\begin{proposition}\label{P:PDE}
The norm $u=\norm{\partial\varphi}^2_\omega$ satisfies the following PDE:
\begin{equation*}
	\Delta_\omega u
	=
	\norm{\nabla'^2 \varphi}_\omega^2
	+
	n(n+1)^2-(n+1)u
\end{equation*}
where $\nabla'$ is the $(1,0)$-part of $\nabla$ and $\Delta_\omega$ is the Laplace-Beltrami operator with non-positive eigenvalues.
\end{proposition}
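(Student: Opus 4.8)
The plan is to prove the identity pointwise by a Bochner-type computation in local holomorphic coordinates $z=(z^1,\dots,z^n)$, after translating the two standing hypotheses into algebraic relations among the second derivatives. Writing $\omega=\im g_{\alpha\bar\beta}dz^\alpha\wedge dz^{\bar\beta}$, the Einstein condition $\Ric_\omega=-(n+1)\omega$ becomes $R_{\alpha\bar\beta}=-(n+1)g_{\alpha\bar\beta}$, while $\im\partial\bar\partial\varphi=(n+1)\omega$ becomes $\varphi_{\alpha\bar\beta}=(n+1)g_{\alpha\bar\beta}$ for the mixed second derivatives. Since the mixed Christoffel symbols of a K\"ahler metric vanish, this last relation is exactly $\nabla_\alpha\varphi_{\bar\beta}=\nabla_{\bar\beta}\varphi_\alpha=(n+1)g_{\alpha\bar\beta}$; that is, the mixed covariant Hessian of $\varphi$ equals $(n+1)$ times the metric, a fact I will use repeatedly. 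Throughout I take $\Delta_\omega=g^{\alpha\bar\beta}\nabla_\alpha\nabla_{\bar\beta}$, which is legitimate on the scalar $u$ because $\nabla_\alpha\nabla_{\bar\beta}u=\partial_\alpha\partial_{\bar\beta}u$, and which has non-positive eigenvalues in this normalization.

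Next I would compute $\Delta_\omega u$ directly for $u=\varphi_\alpha\varphi^\alpha=g^{\alpha\bar\beta}\varphi_\alpha\varphi_{\bar\beta}$, using the Leibniz rule and metric compatibility $\nabla g\equiv0$. One differentiation gives
\begin{equation*}
\nabla_{\bar\beta}u=(n+1)\varphi_{\bar\beta}+\varphi^{\bar\delta}\nabla_{\bar\beta}\varphi_{\bar\delta},
\end{equation*}
where the first term already uses the mixed Hessian relation. Applying $\nabla_\alpha$ and tracing against $g^{\alpha\bar\beta}$ then splits into three pieces. Re-differentiating the explicit term $(n+1)\varphi_{\bar\beta}$ invokes the mixed Hessian relation once more and contributes $(n+1)^2 g^{\alpha\bar\beta}g_{\alpha\bar\beta}=n(n+1)^2$. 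The piece in which $\nabla_\alpha$ falls on $\varphi^{\bar\delta}$ produces $g^{\alpha\bar\beta}g^{\gamma\bar\delta}(\nabla_\alpha\varphi_\gamma)(\nabla_{\bar\beta}\varphi_{\bar\delta})$, which is precisely $\norm{\nabla'^2\varphi}_\omega^2$, since $\nabla'^2\varphi$ has components $\nabla_\alpha\varphi_\gamma$ and the holomorphic Hessian is symmetric.

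The only remaining piece is $g^{\alpha\bar\beta}\varphi^{\bar\delta}\nabla_\alpha\nabla_{\bar\beta}\varphi_{\bar\delta}$, and this is where the curvature enters; it is the step I expect to be the main obstacle. The key observation is that $\nabla_\alpha\varphi_{\bar\delta}=\varphi_{\alpha\bar\delta}=(n+1)g_{\alpha\bar\delta}$ is parallel, so the reversed-order derivative $\nabla_{\bar\beta}\nabla_\alpha\varphi_{\bar\delta}$ vanishes and the piece collapses to the commutator $g^{\alpha\bar\beta}\varphi^{\bar\delta}[\nabla_\alpha,\nabla_{\bar\beta}]\varphi_{\bar\delta}$. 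Invoking the Ricci identity for the $(0,1)$-form $\varphi_{\bar\delta}$ and tracing with $g^{\alpha\bar\beta}$ collapses the full curvature tensor to the Ricci tensor, and the Einstein condition $R_{\alpha\bar\beta}=-(n+1)g_{\alpha\bar\beta}$ turns the result into $-(n+1)\varphi^{\bar\delta}\varphi_{\bar\delta}=-(n+1)u$. The care required here is twofold: one must recognize the vanishing of the reversed-order term, which is exactly what keeps the formula this clean, and one must fix the sign and index conventions for the curvature so that its $g^{\alpha\bar\beta}$-trace is exactly $\Ric_\omega$ before the Einstein equation is applied. Summing the three pieces yields
\begin{equation*}
\Delta_\omega u=\norm{\nabla'^2\varphi}_\omega^2+n(n+1)^2-(n+1)u,
\end{equation*}
as claimed.
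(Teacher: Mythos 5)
Your proof is correct and follows essentially the same Bochner-type computation as the paper: expand $\Delta_\omega(\varphi_\gamma\varphi^\gamma)$ by the product rule, use $\varphi_{\alpha;\bar\beta}=(n+1)g_{\alpha\bar\beta}$ to produce the $n(n+1)^2$ term and to kill the reversed-order second derivative, identify the mixed square term with $\norm{\nabla'^2\varphi}_\omega^2$, and convert the remaining commutator term to $-(n+1)u$ via the Ricci identity and the Einstein condition. The only cosmetic difference is that you apply $\nabla_{\bar\beta}$ before $\nabla_\alpha$, so the curvature lands on $\varphi_{\bar\delta}$ rather than on $\varphi_\gamma$ as in the paper's version, which is just the complex conjugate of the same step.
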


\begin{proof}
In the local coordinates $z=(z^1,\ldots,z^n)$, the identity $\im\partial\bar\partial\varphi=(n+1)\omega$ implies that $\varphi_{\alpha;\bar\beta} = \nabla_{\bar\beta}\varphi_\alpha = (n+1)g_{\alpha\bar\beta}$. Moreover we have
\begin{equation}\label{eqn:identity1}
	\varphi_{\alpha;\lambda\bar\mu}
	=
	\varphi_\beta\ind R\alpha\beta{\lambda\bar\mu}
	\;, \quad
	\varphi_{\bar\beta;\lambda\bar\mu}
	=0
\end{equation}
where $\ind R\alpha\beta{\lambda\bar\mu}$ stands for the curvature tensor: $(\nabla_\lambda\nabla_{\bar\mu}-\nabla_{\bar\mu}\nabla_\lambda)\pdl{}{z^\alpha} =\ind R\alpha\beta{\lambda\bar\mu}\pdl{}{z^\beta}$.
In fact, since $\nabla$ is a K\"ahler connection of $\omega$, we have
\begin{align*}
\nabla_{\bar\mu}\nabla_\lambda \varphi_\alpha
&=
\nabla_\lambda\nabla_{\bar\mu} \varphi_\alpha
+
\varphi_\beta\ind R\alpha\beta{\lambda\bar\mu}
=
\nabla_\lambda \varphi_{\alpha;\bar\mu}
+
\varphi_\beta\ind R\alpha\beta{\lambda\bar\mu} \\
&=
(n+1)\nabla_\lambda g_{\alpha\bar\mu}
+
\varphi_\beta\ind R\alpha\beta{\lambda\bar\mu}
=
\varphi_\beta\ind R\alpha\beta{\lambda\bar\mu} \;,
\\
\nabla_{\bar\mu}\nabla_\lambda \varphi_{\bar\beta}
&=
\nabla_{\bar\mu} \varphi_{\bar\beta;\lambda}
= (n+1) \nabla_{\bar\mu} g_{\bar\beta\lambda}
=0 \;.
\end{align*}
Since $u=\varphi_\gamma \varphi^\gamma$, 
\begin{align*}
	\Delta_\omega u
	&=
	g^{\alpha\bar\beta}
	\nabla_{\bar\beta}\nabla_\alpha
	\paren{\varphi_\gamma \varphi^\gamma}
	\\
	&=
	g^{\alpha\bar\beta}
	\nabla_{\bar\beta}
	\paren{
		\varphi_{\gamma;\alpha}\varphi^\gamma
		+
		\varphi_\gamma\ind{\varphi}{}{\gamma}{;\alpha} 
	}
	\\
	&=
	g^{\alpha\bar\beta}
	\paren{
		\varphi_{\gamma;\alpha\bar\beta} \varphi^\gamma
		+
		\varphi_{\gamma;\alpha}\ind{\varphi}{}{\gamma}{;\bar\beta}
		+
		\varphi_{\gamma;\bar\beta}\ind{\varphi}{}{\gamma}{;\alpha} 
		+
		\varphi_\gamma\ind{\varphi}{}{\gamma}{;\alpha\bar\beta} 
	} 
	\;.
\end{align*}
Using \eqref{eqn:identity1}, we have $\varphi_{\gamma;\alpha\bar\beta} \varphi^\gamma
	=\varphi_\delta\ind R\gamma\delta{\alpha\bar\beta}\varphi^\gamma
	=\varphi^{\bar\delta} R_{\gamma\bar\delta\alpha\bar\beta}\varphi^\gamma$. Since $R_{\gamma\bar\delta\alpha\bar\beta}g^{\alpha\bar\beta}=R_{\gamma\bar\delta}=-(n+1)g_{\gamma\bar\delta}$ from the Einstein condition, we have 
\begin{equation*}	
g^{\alpha\bar\beta}\varphi_{\gamma;\alpha\bar\beta} \varphi^\gamma = -(n+1)\varphi^{\bar\delta}\varphi^{\gamma}g_{\gamma\bar\delta} = -(n+1)u
\end{equation*}
The second and third terms above can be written by 
\begin{align*}
g^{\alpha\bar\beta}\varphi_{\gamma;\alpha}\ind{\varphi}{}{\gamma}{;\bar\beta}
	&=\varphi_{\gamma;\alpha}\varphi^{\gamma;\alpha}
	 = \norm{\nabla'^2 \varphi}_\omega^2 \;,
	 \\
g^{\alpha\bar\beta}\varphi_{\gamma;\bar\beta}\ind{\varphi}{}{\gamma}{;\alpha} 	 
	&= \varphi_{\gamma;\bar\beta}\varphi^{\gamma;\bar\beta}
	= (n+1)^2 g_{\gamma\bar\beta}g^{\gamma\bar\beta}
	= n(n+1)^2 \;.
\end{align*}
The last term is vanishing:  $\varphi_\gamma\ind{\varphi}{}{\gamma}{;\alpha\bar\beta} = \varphi_\gamma g^{\gamma\bar\delta}\varphi_{\bar\delta;\alpha\bar\beta}=0$ from the second equation in \eqref{eqn:identity1}. Therefore we can obtain
\begin{align*}
	\Delta_\omega u
	&=
	-(n+1)u+\varphi_{\alpha;\beta}\varphi^{\alpha;\beta}+n(n+1)^2\;	.
\end{align*}
This completes the proof.
\end{proof}

Now we will show the existence of complete holomorphic vector fields. 
\begin{theorem}\label{thm:main thm2}
Let $\omega$ be the complete \KE metric on $X$ with Ricci curvature $-(n+1)$.
Suppose that there exist a global potential $\varphi$ of $\omega$ in the sense of  $\im\partial\bar\partial\varphi=(n+1)\omega$ such that
\begin{equation*}
\norm{\partial\varphi}_\omega\equiv n+1.
\end{equation*}
Then the vector field 
\begin{equation}\label{eqn:infinitesimal generator}
	V
	=
	\im e^{\frac{\varphi}{n+1}}\mathrm{grad}(\varphi)
\end{equation}
is a complete holomorphic vector field.
\end{theorem}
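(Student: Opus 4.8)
The plan is to read $\mathrm{grad}(\varphi)$ as the $(1,0)$-gradient, so that in local coordinates
\[
V=\im\,e^{\varphi/(n+1)}\,\varphi^\alpha\,\pd{}{z^\alpha},\qquad \varphi^\alpha=g^{\alpha\bar\beta}\varphi_{\bar\beta},
\]
is a genuine section of the $(1,0)$-tangent bundle, and then to prove separately that $V$ is holomorphic and that it is complete. Both facts will be deduced from a single pointwise identity for the $(2,0)$-Hessian of $\varphi$, namely
\[
\varphi_{\alpha;\beta}=-\frac{1}{n+1}\,\varphi_\alpha\varphi_\beta
\]
(equivalently its conjugate $\varphi_{\bar\alpha;\bar\beta}=-\frac{1}{n+1}\varphi_{\bar\alpha}\varphi_{\bar\beta}$), which I regard as the heart of the theorem.

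To establish this identity I would exploit the hypothesis $u:=\norm{\partial\varphi}^2_\omega\equiv(n+1)^2$ in two ways. First, since $u$ is constant, $\Delta_\omega u=0$, and Proposition~\ref{P:PDE} immediately yields $\norm{\nabla'^2\varphi}^2_\omega=(n+1)^2$. Second, differentiating $u\equiv(n+1)^2$ once and using $\varphi_{\alpha;\bar\gamma}=(n+1)g_{\alpha\bar\gamma}$ gives the contracted identity $\varphi^\alpha\varphi_{\alpha;\beta}=-(n+1)\varphi_\beta$. The decisive step is then to consider the $(2,0)$-tensor $S_{\alpha\beta}=\varphi_{\alpha;\beta}+\frac{1}{n+1}\varphi_\alpha\varphi_\beta$ and to compute its squared norm:
\[
\norm{S}^2_\omega=\norm{\nabla'^2\varphi}^2_\omega+\frac{2}{n+1}\,\varphi_{\alpha;\beta}\varphi^\alpha\varphi^\beta+\frac{1}{(n+1)^2}u^2.
\]
Substituting $\norm{\nabla'^2\varphi}^2_\omega=(n+1)^2$, the (real) cross term $\varphi_{\alpha;\beta}\varphi^\alpha\varphi^\beta=-(n+1)u=-(n+1)^3$ from the contracted identity, and $u=(n+1)^2$, the three contributions become $(n+1)^2-2(n+1)^2+(n+1)^2=0$. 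Hence $\norm{S}^2_\omega\equiv0$, so $S\equiv0$, which is exactly the desired Hessian identity. I expect this vanishing-norm computation to be the main obstacle: it requires guessing the correct tensor $S$ in advance, and it succeeds only because of the exact numerical balance forced by the normalizations $\Ric_\omega=-(n+1)\omega$ and $\norm{\partial\varphi}_\omega=n+1$.

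Granting the identity, holomorphicity is a one-line verification. Because $\omega$ is K\"ahler, a $(1,0)$-field $V=V^\alpha\pd{}{z^\alpha}$ is holomorphic precisely when $\nabla_{\bar\gamma}V^\alpha=0$ (the mixed connection coefficients vanish, so $\nabla_{\bar\gamma}V^\alpha=\partial_{\bar\gamma}V^\alpha$). Differentiating $V^\alpha=\im e^{\varphi/(n+1)}\varphi^\alpha$ gives
\[
\nabla_{\bar\gamma}V^\alpha=\im\,e^{\varphi/(n+1)}\Big(\frac{1}{n+1}\varphi_{\bar\gamma}\varphi^\alpha+g^{\alpha\bar\beta}\varphi_{\bar\beta;\bar\gamma}\Big),
\]
and the conjugate Hessian identity $\varphi_{\bar\beta;\bar\gamma}=-\frac{1}{n+1}\varphi_{\bar\beta}\varphi_{\bar\gamma}$ makes the bracket vanish identically. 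Thus $V$ is holomorphic; it is moreover nowhere zero, since $\norm{\partial\varphi}_\omega=n+1\neq0$ forces $\varphi^\alpha\neq0$.

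For completeness I would pass to the real field $\xi=\RE V=V+\overline V$. A direct computation gives $\norm{V}^2_\omega=g_{\alpha\bar\beta}V^\alpha\overline{V^\beta}=e^{2\varphi/(n+1)}\varphi_\alpha\varphi^\alpha=(n+1)^2e^{2\varphi/(n+1)}$, so the length of $\xi$ is a fixed multiple of $e^{\varphi/(n+1)}$, a function of $\varphi$ alone. The role of the factor $\im$ is precisely that it makes $\xi$ tangent to the level sets of $\varphi$: indeed
\[
\xi(\varphi)=\im\,e^{\varphi/(n+1)}\big(\varphi^\alpha\varphi_\alpha-\varphi^{\bar\beta}\varphi_{\bar\beta}\big)=0,
\]
so $\varphi$ is constant along every integral curve of $\xi$, and therefore $\norm{\xi}_\omega$ is constant along each such curve. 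Finally I would invoke completeness of $(X,\omega)$: on a maximal integral curve $\gamma\colon(T_-,T_+)\to X$ the speed is a finite constant, so $\gamma$ has locally finite length; were $T_+<\infty$, the limit $\lim_{t\to T_+}\gamma(t)$ would exist in $X$ by the Cauchy property and completeness, allowing $\gamma$ to be continued past $T_+$ and contradicting maximality (and symmetrically at $T_-$). Hence $\xi$, and with it the holomorphic field $V$, is complete, which finishes the proof.
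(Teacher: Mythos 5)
Your proof is correct and follows essentially the same route as the paper: both exploit Proposition~\ref{P:PDE} with $\Delta_\omega u=0$ to get $\norm{\nabla'^2\varphi}_\omega^2=(n+1)^2$, the differentiated identity $\varphi_{\alpha;\beta}\varphi^\alpha=-(n+1)\varphi_\beta$, and the same complete-the-square cancellation --- your $\norm{S}_\omega^2=0$ is the paper's computation $\norm{\nabla''V}_\omega^2=0$ up to conjugation and the conformal factor $e^{2\varphi/(n+1)}$, your version merely isolating the pointwise Hessian identity as an intermediate statement. The completeness argument (integral curves of $\RE V$ lie in level sets of $\varphi$, hence have constant speed, so metric completeness prevents finite-time escape) likewise matches the paper's, which packages the same observation as a lemma on rescaling the complete field $\RE W$ by a flow-invariant function.
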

Here $\mathrm{grad}(\varphi)$ is the $(1,0)$-part of the gradient vector field of $\varphi$ with respect to $\omega$:
\begin{equation*}
	\mathrm{grad}(\varphi)
	=
	\varphi^\alpha\pd{}{z^\alpha}
\end{equation*}
where $\varphi^\alpha=\varphi_{\bar\beta}g^{\bar\beta\alpha}$ (\cite{Jost}). When we denote by $W=\im\mathrm{grad}(\varphi)$, then $V=e^{\frac{\varphi}{n+1}}W$. Since the vector field $W$ is the turned gradient of $\varphi$ by the complex structure,  $W$ is tangent to $\varphi$:
\begin{equation*}
(\RE W) \varphi
	 =  \paren{\im\varphi^\alpha\pd{}{z^\alpha}-\im\varphi^{\bar\alpha}\pd{}{z^{\bar\alpha}}}\varphi
	 =   \paren{\im\varphi^\alpha\varphi_{\alpha}-\im\varphi^{\bar\alpha}\varphi_{\bar\alpha}}
	 =0 \;.
\end{equation*}
So we have 
\begin{equation*}
(\RE W)e^{\frac{\varphi}{n+1}}=0\;.
\end{equation*}
Moreover $W$ has constant length $\norm{W}_\omega=\norm{\partial\varphi}_\omega$, so the real tangent vector field $\RE W$ is complete. Therefore the following lemma implies that $V=e^{\frac{\varphi}{n+1}}W$ is also complete.
\begin{lemma}
Let $Z$ is a complete $(1,0)$ tangent vector field on $X$. If there is a nowhere vanishing smooth function $\rho:X\to\RR$ with $(\RE Z)\rho\equiv 0$, then $\rho Z$ is also complete.
\end{lemma}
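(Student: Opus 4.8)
The plan is to reduce the statement to an elementary fact about the time-reparametrization of a complete real flow. First I would pass from the $(1,0)$-field to its real part. Since $\rho$ is real-valued, $\overline{\rho Z}=\rho\overline{Z}$, so that
\begin{equation*}
\RE(\rho Z)=\rho Z+\overline{\rho Z}=\rho\paren{Z+\overline{Z}}=\rho\,\RE Z.
\end{equation*}
Writing $Y:=\RE Z$, the hypothesis that $Z$ is complete means precisely that $Y$ is a complete real vector field, and the completeness of $\rho Z$ is the completeness of $\rho Y$. Thus it suffices to prove the following: if $Y$ is a complete smooth real vector field and $\rho$ is a (nowhere vanishing) smooth function with $Y\rho\equiv 0$, then $\rho Y$ is complete.

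Next I would record the single geometric input supplied by the hypothesis $Y\rho\equiv 0$, namely that $\rho$ is constant along every integral curve of $Y$. Indeed, letting $\phi_t$ denote the globally defined flow of $Y$ and fixing $p\in X$, one has $\tfrac{d}{dt}\rho(\phi_t(p))=(Y\rho)(\phi_t(p))=0$, so that $\rho(\phi_t(p))=\rho(p)$ for all $t\in\RR$.

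The heart of the argument is then to exhibit the integral curves of $\rho Y$ explicitly as linear time-reparametrizations of those of $Y$. Fixing $p\in X$, I set $c=\rho(p)$ and define $\gamma(s)=\phi_{cs}(p)$, which is defined for all $s\in\RR$ precisely because $Y$ is complete. Differentiating and using that $\rho$ is constant (equal to $c$) along the $Y$-orbit through $p$, I obtain
\begin{equation*}
\gamma'(s)=c\,Y(\phi_{cs}(p))=\rho(\gamma(s))\,Y(\gamma(s)),
\end{equation*}
so that $\gamma$ is an integral curve of the smooth vector field $\rho Y$ through $p$, defined on all of $\RR$. By uniqueness of integral curves this is the maximal one, whence the flow of $\rho Y$ through $p$ exists for all time; as $p$ was arbitrary, $\rho Y$ is complete and the lemma follows.

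The only step that requires care is the middle identity in the computation of $\gamma'(s)$: it rests on replacing the a priori point-dependent factor $\rho(\gamma(s))$ by the constant $c$, which is legitimate exactly because $\gamma(s)$ lies on the $Y$-orbit through $p$ along which $\rho$ is constant. I do not anticipate a genuine obstacle here; I note that the nowhere vanishing of $\rho$ is not actually needed for completeness—a zero of $\rho$ merely yields a stationary point of $\rho Y$—but it is harmless and matches the application, where $\rho=e^{\varphi/(n+1)}$ and $Z=\im\,\mathrm{grad}(\varphi)$.
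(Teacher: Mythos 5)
Your proof is correct and follows essentially the same route as the paper: both arguments observe that $\rho$ is constant along each integral curve of $\RE Z$ and then exhibit the integral curves of $\rho\,\RE Z$ as the linear time-reparametrizations $t\mapsto\gamma(ct)$ with $c$ the constant value of $\rho$ on that orbit. Your additional remarks (the explicit identity $\RE(\rho Z)=\rho\,\RE Z$ and the observation that nowhere-vanishing of $\rho$ is not needed) are accurate but do not change the argument.
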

\begin{proof}
Take an integral curve $\gamma:\RR\to X$ of $\RE Z$. It satisfies 
\begin{equation*}
(\RE Z)\circ \gamma = \dot\gamma
\end{equation*}
Since $(\RE Z)\rho\equiv 0$, the curve $\gamma$ lies on a level set of $\rho$ so $\rho\circ\gamma\equiv c$ for some  constant $c$. For the curve $\sigma:\RR\to X$ defined by $\sigma(t)=\gamma(ct)$, we have 
\begin{equation*}
(\rho\RE Z)\circ \sigma (t)  = (\rho\RE Z)\circ \gamma (ct)  = c(\RE Z)(\gamma(ct)) = c\dot\gamma(ct) =\dot\sigma(t)
\end{equation*}
This means that $\sigma:\RR\to\Omega$ is the integral curve of $\rho\RE Z$; therefore $\rho\RE Z$ is complete. 
\end{proof}

Now we will prove that $V$ in \eqref{eqn:infinitesimal generator} is holomorphic. The hypothesis and Proposition \ref{P:PDE} imply that $\norm{\nabla'^2\varphi}_\omega^2=\varphi_{\alpha;\beta}\varphi^{\alpha;\beta}=(n+1)^2$. On the other hand, we have
\begin{align*}
	0
	&=
	\partial\paren{\norm{\partial\varphi}_\omega^2}
	=
	(\varphi_\alpha \varphi^\alpha)_{;\gamma}dz^\gamma
	=
	\paren{
		\varphi_{\alpha;\gamma}\varphi^\alpha
		+
		\varphi_\alpha\ind{\varphi}{}{\alpha}{;\gamma}
	}
	dz^\gamma\\
	&=
	\paren{
		\varphi_{\alpha;\gamma}\varphi^\alpha
		+
		\varphi_\alpha\ind{\varphi}{}{}{\bar\beta;\gamma}g^{\bar\beta\alpha}
	}
	dz^\gamma
	=
	\paren{
		\varphi_{\alpha;\gamma}\varphi^\alpha
		+
		(n+1)\varphi_\alpha g_{\gamma\bar\beta}g^{\bar\beta\alpha}
	} 
	dz^\gamma\\
	&=
	\paren{
		\varphi_{\alpha;\gamma}\varphi^\alpha
		+
		(n+1)\varphi_\gamma
	}dz^\gamma.
\end{align*}
It follows that
\begin{equation}\label{E:Key_equation}
\varphi_{\alpha;\gamma}\varphi^\alpha
=
-(n+1)\varphi_\gamma.
\end{equation}
Denote by $\psi=\exp\varphi$.
Recall that $V$ is defined as follows:
\begin{equation*}
V
=
\im V^\alpha\pd{}{z^\alpha}
\quad\text{where}\quad
V^\alpha
=
e^{\frac{\varphi}{n+1}}\varphi^\alpha
=
\psi^{\frac{1}{n+1}}\varphi^\alpha.
\end{equation*}
It follows that
\begin{equation*}
\nabla'' V
=
\im\ind{V}{}{\alpha}{;\bar\beta}\pd{}{z^\alpha}\otimes dz^{\bar\beta}
\end{equation*}
where $\nabla''$ is the $(0,1)$ part of the K\"ahler connection $\nabla$ and
\begin{align*}
	\ind{V}{}{\alpha}{;\bar\beta}
	&=
	\frac{1}{n+1}\psi^{\frac{1}{n+1}}
	\paren{\log\psi}_{\bar\beta}\varphi^\alpha
	+
	\psi^{\frac{1}{n+1}}\ind{\varphi}{}{\alpha}{;\bar\beta}
	\\
	&=
	\frac{1}{n+1}\psi^{\frac{1}{n+1}}
	\varphi_{\bar\beta}\varphi^\alpha
	+
	\psi^{\frac{1}{n+1}}\ind{\varphi}{}{\alpha}{;\bar\beta}.
\end{align*}
A straightforward computation gives that
\begin{align*}
	\norm{\nabla'' V}_\omega^2
	&=
	\frac{1}{(n+1)^2}(\psi^{\frac{1}{n+1}})^2(\varphi_\alpha \varphi^\alpha)^2
	+
	\frac{1}{n+1}(\psi^{\frac{1}{n+1}})^2 \varphi_{\bar\beta}
	\varphi^\alpha\ind{\varphi}{\alpha}{;\bar\beta}{}\\
	&\quad
	+
	\frac{1}{n+1}(\psi^{\frac{1}{n+1}})^2
	\ind{\varphi}{}{\alpha}{;\bar\beta}\varphi^{\bar\beta} \varphi_\alpha
	+
	(\psi^{\frac{1}{n+1}})^2 \varphi^{\alpha;\beta}\varphi_{\alpha;\beta}.
\end{align*}
It follows from Proposition \ref{P:PDE} and \eqref{E:Key_equation} that
\begin{equation*}
\norm{\nabla'' V}_\omega^2
=
(\psi^{\frac{1}{n+1}})^2
\paren{\frac{1}{(n+1)^2}\norm{\partial\varphi}^4_\omega
+
2\frac{-(n+1)}{n+1}\norm{\partial\varphi}^2_\omega
+
(n+1)^2
}=0 \;.
\end{equation*}
This implies that $V$ is holomorphic.

\medskip

Combining Theorem~\ref{thm:main thm1} and Theorem~\ref{thm:main thm2}, we obtain Theorem~\ref{thm:main thm}.

\begin{remark}
Suppose that $\varphi:X\to\RR$ satisfies $\im\partial\bar\partial\varphi=(n+1)\omega$ and 
\begin{equation*}
\norm{\partial\varphi}_\omega\equiv C
\end{equation*}
for some constant $C>0$. The bundle morphism $S:T^{1,0}X\to T^{1,0}X$ defined locally by
\begin{equation*}
S = \ind{\varphi}{}{\alpha}{;\bar\beta}\ind{\varphi}{}{\bar\beta}{;\gamma}\pd{}{z^\alpha}\otimes dz^\gamma 
\end{equation*}
is a positive semidefinite symmetric operator. Equation~\eqref{E:Key_equation} implies that $\mathrm{grad}(\varphi)$ is a field of eigenvectors of $S$ with constant eingenvalue $(n+1)^2$:
\begin{multline*}
S(\mathrm{grad}(\varphi)) 
	= S\paren{\varphi^\gamma \pd{}{z^\gamma} }
	=  \ind{\varphi}{}{\alpha}{;\bar\beta}\ind{\varphi}{}{\bar\beta}{;\gamma}\varphi^\gamma \pd{}{z^\alpha} 
	= -(n+1)\ind{\varphi}{}{\alpha}{;\bar\beta}\varphi^{\bar\beta}\pd{}{z^\alpha} 
	\\
	= (n+1)^2\varphi^\alpha \pd{}{z^\alpha} 
	= (n+1)^2 \mathrm{grad}(\varphi) \;.
\end{multline*}
Since the value $\norm{\nabla'^2 \varphi}_\omega^2 = \varphi^{\alpha;\beta}\varphi_{\beta;\alpha} = \ind{\varphi}{}{\alpha}{;\bar\beta}\ind{\varphi}{}{\bar\beta}{;\alpha}$ coincides with the trace of $S$, we have $\norm{\nabla'^2 \varphi}_\omega^2\geq(n+1)^2$. Then the PDE equation in Proposition~\ref{P:PDE} implies $(n+1)C^2=\norm{\nabla'^2 \varphi}_\omega^2+n(n+1)^2\geq (n+1)^3$ so 
\begin{equation*}
C\geq n+1\;.
\end{equation*}
For the case of the unit ball $\UB^n = \set{z\in\CC^n: \norm{z}<1}$ where $z=(z^1,\ldots,z^n)$ is the standard coordinates and $\norm{\;\cdot\;}$ is the Euclidean norm, the function
\begin{equation*}
\varphi=(n+1)\log \frac{\abs{1+z^1}^{2}}{\paren{1-\norm{z}^2}}
\end{equation*}
satisfies $\im\partial\bar\partial\varphi=(n+1)\omega$ and $\norm{\partial\varphi}_\omega \equiv n+1$. This is an optimal case of the inequality $C\geq n+1$.
\end{remark}


\section{Boundary behavior of the \KE metric on a strongly pseudoconvex domain}
In this section, we shall compute the boundary behavior of $\norm{\partial\varphi}^2_\omega$ on a bounded strongly pseudoconvex domain. First we briefly recall the boundary behavior of the solution of the complex Monge-Ampere equation due to Cheng and Yau~\cite{Cheng-Yau}.

Let $\Omega$ be a smooth bounded strongly pseudoconvex domain in $\CC^n$. Then there exists a defining function $r$ of $\Omega$ satisfying the following conditions:
\begin{itemize}
\item [(\romannumeral1)] $r\in{C^\infty(\overline\Omega})$,
\item [(\romannumeral2)] $\Omega=\{z\in\CC^n:r(z)<0\}$,
\item [(\romannumeral3)] $\partial r\neq0$ on $\partial\Omega$, and
\item [(\romannumeral4)] $\paren{r_{\alpha\bar\beta}} >0$ in $\overline\Omega$.
\end{itemize}
Denote by $w=-\log(-r)$. Then $w$ is a strictly plurisubharmonic function defined in $\Omega$. Easy calculations show that
\begin{equation} \label{E:Levi_form_of_g}
w_{\alpha\bar\beta}
=\frac{r_{\alpha\bar\beta}}{-r}+
\frac{r_\alpha r_{\bar\beta}}{r^2}\;,
\end{equation}
and the inverse is
\begin{equation} \label{E:inverse}
w^{\bar\beta\alpha}
	=(-r)\paren{r^{\bar\beta\alpha}
	+\frac{r^{\bar\beta}r^\alpha}{r-\abs{\partial r}^2}},
\end{equation}
where
\begin{equation*}
(r^{\alpha\bar\beta})=
\paren{r_{\alpha\bar\beta}}^{-1},
\quad
r^\alpha
	=r^{\alpha\bar\beta}r_{\bar\beta},
\quad\text{and} \quad
\abs{\partial r}^2
	= r^{\alpha\bar\beta}r_\alpha r_{\bar\beta}.
\end{equation*}
It is also easy to see that
\begin{equation*}
w^{\bar\beta\alpha}w_\alpha{w}_{\bar\beta}
	=\frac{\abs{\partial r}^2}{\abs{\partial r}^2-r}\le1.
\end{equation*}
Thus the metric $w_{\alpha\bar\beta}$ is a complete K\"{a}hler metric on $\Omega$. The Ricci tensor of $w_{\alpha\bar\beta}$ is given by
\begin{equation}\label{E:compatible}
\begin{aligned}
R_{\alpha\bar\beta} 
	& =-\pd{^2}{z^\alpha\partial{z}^{\bar\beta}} \log\det(w_{\gamma\bar\delta}) \\
	& = -(n+1)w_{\alpha\bar\beta}
	-\frac{\partial^2}{\partial{z^\alpha}\partial{z^{\bar\beta}}}
	\log\det(r_{\gamma\bar\delta})(-r+\abs{\partial r}^2).
\end{aligned}
\end{equation}
If we denote by $F=\log\det(r_{\alpha\bar\beta})(-r+\abs{\partial r}^2)$, then $F$ is a positive smooth function in $\overline\Omega$ satisfying
\begin{equation*}
R_{\alpha\bar\beta}
+(n+1)w_{\alpha\bar\beta}
=
\frac{\partial^2F}{\partial{z^\alpha}\partial{z^{\bar\beta}}}.
\end{equation*}
It is remarkable to note that the function $F$, which measures how far the metric $w_{\alpha\bar\beta}$ from the \KE metric, depends on the defining function $r$. The Cheng-Yau Theorem implies that there exists a solution the following complex Monge-Ampere equation:
\begin{equation} \label{E:Monge-Ampere'}
\begin{aligned}
\det(w_{\alpha\bar\beta}+u_{\alpha\bar\beta})
=e^{(n+1)u}e^F\det(w_{\alpha\bar\beta}) \\
\frac{1}{c}(w_{\alpha\bar\beta})\le(w_{\alpha\bar\beta}+u_{\alpha\bar\beta}) \le c(w_{\alpha\bar\beta}).
\end{aligned}
\end{equation}
It is easy to see that $w_{\alpha\bar\beta}+u_{\alpha\bar\beta}$ is the unique complete \KE metric on $\Omega$. 
In \cite{Fefferman}, Fefferman developed a way to find a good defining function $r$, which is called an approximate solution of the Monge-Ampere equation. This defining function $r$ satisfies that 
\begin{equation*}
F
=
\log\det(r_{\alpha\bar\beta})(-r+\abs{\partial r}^2)
=
O(\abs{r}^{n+1})
\end{equation*}
Using this approximate solution, Cheng and Yau computed the boundary behavior of $u$.

\begin{theorem}[Simple Version \cite{Cheng-Yau}] \label{T:CY2}
Let $\Omega$ be a smooth strongly pseudoconvex domain in $\CC^n$ and let $r$ be a smooth defining function of $\Omega$. Suppose that $F=O(\abs r^{n+1})$ and $u$ is a solution of \eqref{E:Monge-Ampere'}. Then
\begin{equation*}
\abs{D^pu}(x)=O(\abs r^{n+1/2-p-\varepsilon})
\quad\text{for}\quad \varepsilon>0
\end{equation*}
where $\abs{D^pu}(x)$ is the Euclidean length of the $p$-th derivative of $u$.
\end{theorem}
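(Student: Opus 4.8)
The plan is to regard \eqref{E:Monge-Ampere'} as a single scalar fully nonlinear elliptic equation for $u$ and to obtain the boundary decay in two stages: a barrier estimate on $u$ itself, governed by the decay $F=O(\abs r^{n+1})$, followed by a rescaling argument that upgrades this to every derivative. Writing the equation in logarithmic form,
\begin{equation*}
\log\det(w_{\alpha\bar\beta}+u_{\alpha\bar\beta})-\log\det(w_{\alpha\bar\beta})=(n+1)u+F,
\end{equation*}
I would first record two structural facts. By the second line of \eqref{E:Monge-Ampere'} the solution metric $g_{\alpha\bar\beta}=w_{\alpha\bar\beta}+u_{\alpha\bar\beta}$ is uniformly equivalent to $w_{\alpha\bar\beta}$, so the linearization $\Delta_g=g^{\alpha\bar\beta}\partial_\alpha\partial_{\bar\beta}$ is uniformly elliptic relative to $w$; and by \eqref{E:Levi_form_of_g}--\eqref{E:inverse} the reference metric is complete and, in the splitting into complex normal and tangential directions at $\partial\Omega$, comparable to the model complete metric of the ball, with the $w$-gradient of the potential bounded, $w^{\bar\beta\alpha}w_\alpha w_{\bar\beta}\le1$. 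Since $u\equiv0$ is the solution when $F\equiv0$, the whole estimate is driven by the inhomogeneity $F$.

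The first stage is the $C^0$ estimate. Using the concavity of $A\mapsto\log\det A$ one gets the two differential inequalities $\Delta_w u\ge(n+1)u+F$ and $\Delta_g u\le(n+1)u+F$. I would then build two-sided barriers $\pm A(-r)^{\lambda}$ with $\lambda$ just below $n+1$ and $A$ large. A direct computation of $\iddbar(-r)^{\lambda}$ paired against the metric, together with the comparability of $g$ and $w$ and the bound $w^{\bar\beta\alpha}w_\alpha w_{\bar\beta}\le1$, shows that $A(-r)^{\lambda}$ is a supersolution and $-A(-r)^{\lambda}$ a subsolution of the linear inequalities satisfied by $u$ once $\lambda<n+1$. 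Because $(\Omega,g)$ is complete with Ricci curvature equal to $-(n+1)$, and hence bounded below, the generalized maximum principle of Cheng--Yau applies and forces $\abs u\le A(-r)^{\lambda}$, that is $\abs u=O(\abs r^{\,n+1-\varepsilon})$ for every $\varepsilon>0$; this already contains the $p=0$ case of the assertion.

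The second stage promotes this to derivatives by a blow-up at the boundary. Fixing $\xi\in\partial\Omega$ and interior points $x_k\to\xi$, I would dilate the coordinates centred at $x_k$ anisotropically --- by $(-r(x_k))$ in the complex normal direction and by $(-r(x_k))^{1/2}$ in the tangential ones --- so that $\Omega$ converges to a fixed model half-space, the rescaled metrics converge to the model complete metric, and the Monge--Amp\`ere equation is preserved with a right-hand side that tends to the homogeneous model equation (both $F$ and $u$ being small near $x_k$). On a fixed interior ball of the model one then invokes the standard interior a priori estimates for the complex Monge--Amp\`ere equation --- Yau's second-order estimate, the Evans--Krylov $C^{2,\alpha}$ estimate, and Schauder bootstrapping --- all uniform in $k$ because the rescaled geometry is uniformly controlled. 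Undoing the dilation converts these uniform bounds into $\abs{D^pu}(x)=O(\abs r^{\,n+1/2-p-\varepsilon})$, the half-integer and the $-\varepsilon$ being produced, respectively, by the anisotropy of the scaling (normal versus tangential powers of $-r$) and by the interpolation losses incurred in passing from the $C^0$ bound to H\"older norms of derivatives.

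The main obstacle is this second stage. One must verify that the anisotropically rescaled domains and metrics converge to a nondegenerate model with constants independent of $k$, that the rescaled solutions remain in the admissible class (strictly plurisubharmonic and uniformly pinched as in \eqref{E:Monge-Ampere'}) so the interior estimates apply uniformly, and that the careful bookkeeping of the different normal and tangential scaling exponents reproduces exactly the stated power $n+1/2-p-\varepsilon$. By contrast the barrier step is comparatively routine once the comparability of $g$ and $w$ and the explicit inverse \eqref{E:inverse} are in hand.
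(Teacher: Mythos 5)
The paper does not prove this statement at all: it is quoted verbatim as a ``Simple Version'' of a theorem of Cheng--Yau \cite{Cheng-Yau}, so there is no internal proof to compare against, and any assessment must be against the original argument. Your sketch does reproduce the correct architecture of that argument: the two concavity inequalities $\Delta_w u\ge (n+1)u+F$ and $\Delta_g u\le (n+1)u+F$ are right, the barriers $\pm A(-r)^\lambda$ work for $-1<\lambda<n+1$ (the indicial roots of $\Delta_w-(n+1)$ acting on powers of $-r$ are exactly $\lambda=-1$ and $\lambda=n+1$), and the generalized maximum principle on the complete manifold $(\Omega,g)$ gives $\abs{u}=O(\abs{r}^{n+1-\varepsilon})$. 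The second stage is, as you say, where the content lies, and as written it has two soft spots. First, ``Yau's second-order estimate'' is not available as a purely \emph{interior} estimate for the complex Monge--Amp\`ere equation; in Cheng--Yau's setting the needed $C^{1,1}$ control is exactly the uniform pinching $c^{-1}w\le g\le cw$ already built into \eqref{E:Monge-Ampere'}, and what one actually runs in the quasi-coordinate charts is Evans--Krylov plus Schauder on top of that pinching. Second, the exponent $n+1/2$ is not produced by the anisotropy of the rescaling per se but by the fact that one tangential quasi-derivative (of size $(-r)^{1/2}\partial_{\mathrm{tan}}$) of $F=O(\abs{r}^{n+1})$ is only $O(\abs{r}^{n+1/2})$, so $F$ lies in the weighted H\"older space of weight $n+1/2$ and that, not the $C^0$ weight $n+1-\varepsilon$, is what the localized Schauder estimates can propagate to the derivatives of $u$; converting $p$ quasi-derivatives back to Euclidean ones then costs the factor $\abs{r}^{-p}$ and yields the stated $O(\abs{r}^{n+1/2-p-\varepsilon})$. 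With those two points repaired your outline matches the Cheng--Yau proof, but as it stands it is a correct roadmap rather than a proof: the uniform control of the quasi-coordinate geometry and the weighted Schauder bookkeeping are precisely the technical core of \cite{Cheng-Yau} and are deferred rather than supplied.
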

In particular Theorem \ref{T:CY2} says that
\begin{equation} \label{E:boundary}
\abs{u_\alpha}= O(\abs r^{n-1/2-\varepsilon})
\quad\text{and}\quad
\abs{u_{\bar\beta}}= O(\abs r^{n-1/2-\varepsilon})
\end{equation}
for $\varepsilon>0$ and $1\le\alpha,\beta\le{n}$.
Before computing the boundary behavior of $\norm{\partial\varphi}_\omega$, we introduce the following lemma.
\begin{lemma}[\cite{Choi}] \label{L:identity}
There exists a hermitian $n\times{n}$ matrix 
\begin{equation*}
N=(N_{\alpha\bar\beta})
	\in\mathrm{Mat}_{n\times{n}}\paren{C^\infty(\Omega)\cap{C}^{n-3/2-\varepsilon}
	(\overline \Omega)}
\end{equation*}
with $\norm{N}=O(\abs r^{n-3/2-\varepsilon})$ for $\varepsilon>0$, which satisfies that
\begin{equation*}
g^{\bar\beta\alpha}-w^{\bar\beta\alpha}
	=w^{\bar\beta\gamma}N_{\gamma\bar\delta}w^{\bar\delta\alpha}.
\end{equation*}
In particular, $g^{\bar\beta\alpha}\in{C}^\infty(\Omega)\cap{C}^{n-3/2-\varepsilon}(\overline\Omega)$ and $g^{\bar\beta\alpha}=O(\abs r)$ for $\varepsilon>0$.
\end{lemma}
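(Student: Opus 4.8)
The plan is to treat the identity as a relation among the three positive-definite Hermitian matrices $G=(g_{\alpha\bar\beta})$, $W=(w_{\alpha\bar\beta})$ and $U=(u_{\alpha\bar\beta})$, which by \eqref{E:Monge-Ampere'} satisfy $G=W+U$. Written in matrix form, the desired conclusion $g^{\bar\beta\alpha}-w^{\bar\beta\alpha}=w^{\bar\beta\gamma}N_{\gamma\bar\delta}w^{\bar\delta\alpha}$ is simply $G^{-1}-W^{-1}=W^{-1}NW^{-1}$, so there is no choice: I would \emph{define}
\begin{equation*}
N=W\paren{G^{-1}-W^{-1}}W=WG^{-1}W-W,
\end{equation*}
and first observe that this $N$ is automatically Hermitian, since $W$, $G$, and hence $G^{-1}$, are Hermitian and $(WG^{-1}W)^{*}=WG^{-1}W$. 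This disposes of existence, the identity, and the symmetry of $N$ at once; the entire content of the lemma is then the size and boundary regularity of $N$.

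To estimate $N$ I would rewrite it so that the factor $U$ is visible. Using $W-G=-U$,
\begin{equation*}
N=\paren{W-G}G^{-1}W=-U\,G^{-1}W=-U\paren{I+W^{-1}U}^{-1},
\end{equation*}
the last equality because $G^{-1}W=\paren{W^{-1}G}^{-1}=\paren{I+W^{-1}U}^{-1}$. Now I would feed in the two quantitative inputs already available: Theorem~\ref{T:CY2} with $p=2$ gives $u_{\alpha\bar\beta}=O\paren{\abs r^{\,n-3/2-\varepsilon}}$, hence $\norm U=O\paren{\abs r^{\,n-3/2-\varepsilon}}$; and \eqref{E:inverse} shows every entry of $W^{-1}=(w^{\bar\beta\alpha})$ is $O(\abs r)$. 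Since the matrices have the fixed size $n$, multiplying these entrywise bounds yields $\norm{W^{-1}U}=O\paren{\abs r^{\,n-1/2-\varepsilon}}$, which tends to $0$ as the point approaches $\partial\Omega$.

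The estimate then closes on a boundary collar where $\norm{W^{-1}U}\le\tfrac12$, so that $\norm{\paren{I+W^{-1}U}^{-1}}\le2$ and
\begin{equation*}
\norm N\le\norm U\,\norm{\paren{I+W^{-1}U}^{-1}}=O\paren{\abs r^{\,n-3/2-\varepsilon}};
\end{equation*}
on the compact complement of the collar all quantities are smooth and bounded, so the bound is global. For the remaining assertions I would substitute $N$ back into the identity: interior smoothness of $g^{\bar\beta\alpha}$ is the interior regularity of the Monge--Amp\`ere solution, while $w^{\bar\beta\alpha}\in C^\infty(\overline\Omega)$ (from \eqref{E:inverse}) together with $N\in C^{n-3/2-\varepsilon}(\overline\Omega)$ (inherited from the boundary regularity of the second derivatives of $u$ and the smoothness of $X\mapsto(I+X)^{-1}$) give $g^{\bar\beta\alpha}\in C^\infty(\Omega)\cap C^{n-3/2-\varepsilon}(\overline\Omega)$; and because $w^{\bar\beta\alpha}=O(\abs r)$ while the correction term $w^{\bar\beta\gamma}N_{\gamma\bar\delta}w^{\bar\delta\alpha}=O(\abs r)\cdot O\paren{\abs r^{\,n-3/2-\varepsilon}}\cdot O(\abs r)$ is of higher order, I get $g^{\bar\beta\alpha}=O(\abs r)$.

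The step I expect to be the main obstacle is the honest passage from Cheng--Yau's \emph{entrywise} Euclidean derivative bounds to a bound on $\norm N$ valid \emph{uniformly up to the boundary}: one must verify that $\paren{I+W^{-1}U}^{-1}$ stays bounded near $\partial\Omega$ --- which is exactly what the decay $\norm{W^{-1}U}\to0$ secures --- and that the fixed dimension $n$ lets the entrywise $O$-estimates pass to the matrix norm without eroding the exponent $n-3/2-\varepsilon$.
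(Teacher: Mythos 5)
The paper does not prove this lemma; it is imported verbatim from \cite{Choi}. Your blind reconstruction is correct and follows what is essentially the only natural route (and the one taken in the cited reference): the identity forces $N=W(G^{-1}-W^{-1})W$ where $G=W+U$, Hermiticity is automatic, and the whole content reduces to the size of $U=(u_{\alpha\bar\beta})$, which Theorem~\ref{T:CY2} with $p=2$ controls as $O(\abs r^{\,n-3/2-\varepsilon})$; your factorization $N=-U(I+W^{-1}U)^{-1}$ together with $\norm{W^{-1}U}=O(\abs r^{\,n-1/2-\varepsilon})\to 0$ (plus $\frac1c W\le G\le cW$ on compact parts, which keeps $\det(I+W^{-1}U)=\det G/\det W$ uniformly bounded away from zero) closes the estimate. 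The one point to tighten: the Hölder class $N\in C^{n-3/2-\varepsilon}(\overline\Omega)$ does not follow from the pointwise derivative bounds of the ``Simple Version'' of Theorem~\ref{T:CY2} alone --- you need the full Cheng--Yau regularity statement $u\in C^\infty(\Omega)\cap C^{n+1/2-\varepsilon}(\overline\Omega)$, so that $D^2u\in C^{n-3/2-\varepsilon}(\overline\Omega)$ and the smoothness of $X\mapsto (I+X)^{-1}$ on the set where $I+X$ is invertible can be invoked. With that input made explicit, the argument is complete.
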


Now we consider the boundary behavior of $\norm{\partial\varphi}_\omega^2$ near the boundary.
Note that $\varphi=(n+1)g=(n+1)(w+u)$.
It follows from Lemma \ref{L:identity} that
\begin{align*}
\norm{\partial\varphi}^2_\omega
&=
\varphi_\alpha \varphi_{\bar\beta}g^{\alpha\bar\beta}
=
(n+1)^2(w+u)_\alpha(w+u)_{\bar\beta}g^{\alpha\bar\beta}
\\
&=
(n+1)^2(w+u)_\alpha(w+u)_{\bar\beta}
\paren{w^{\bar\beta\alpha}+w^{\bar\beta\gamma}N_{\gamma\bar\delta}w^{\bar\delta\alpha}}
\\
&=
(n+1)^2
\Big(w_\alpha w_{\bar\beta}w^{\alpha\bar\beta}
	+
	w_\alpha u_{\bar\beta}w^{\bar\beta\alpha}
	+
	u_\alpha w_{\bar\beta}w^{\bar\beta\alpha}
	\\
	&\qquad\qquad\qquad\qquad\qquad+
	(w+u)_\alpha(w+u)_{\bar\beta}
	w^{\bar\beta\gamma}N_{\gamma\bar\delta}w^{\bar\delta\alpha}
\Big) \;.
\end{align*}
It follows from \eqref{E:Levi_form_of_g}, \eqref{E:inverse} and \eqref{E:boundary} that
\begin{align*}
w_\alpha&=O(\abs r^{-1})\;, &
w^{\bar\beta\alpha}&=O(\abs r)\;, \\
u_\alpha&=O(\abs r^{n-1/2-\varepsilon}) \;,
&
N^{\gamma\bar\delta}
&=O(\abs r^{n-3/2-\varepsilon})\;,
\end{align*}
thus we have
\begin{equation*}
\norm{\partial\varphi}^2_\omega
=
(n+1)^2\frac{\abs{\partial r}^2}{\abs{\partial r}^2-r}
+O(\abs r).
\end{equation*}
In particular, we have
\begin{proposition}\label{prop:boundary behavior}
Let $\Omega$ be a bounded strongly pseudoconvex domain with smooth boundary.
Let $\omega=\im\sum g_{\alpha\bar\beta}dz^\alpha\wedge dz^{\bar\beta}$ be the unique complete \KE metric on $\Omega$. Then
\begin{equation*}
\norm{\partial\varphi}^2_\omega\rightarrow(n+1)^2
\quad
\text{as $p\rightarrow\partial\Omega$,}
\end{equation*}
where $\varphi=\log\det(g_{\alpha\bar\beta})$.
\end{proposition}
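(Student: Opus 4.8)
The plan is to reduce the boundary computation of $\norm{\partial\varphi}_\omega^2$ to the explicit Cheng--Yau comparison geometry, whose asymptotics are already recorded above. Recall that for a Fefferman defining function $r$ and $w=-\log(-r)$, the \KE potential is $w+u$ with $u$ solving the Monge--Amp\`ere equation \eqref{E:Monge-Ampere'}, and that the canonical potential satisfies $\varphi=(n+1)(w+u)$, so $\varphi_\alpha=(n+1)(w_\alpha+u_\alpha)$. Writing $\norm{\partial\varphi}_\omega^2=\varphi_\alpha\varphi_{\bar\beta}g^{\alpha\bar\beta}$, I would first trade the \KE inverse metric $g^{\alpha\bar\beta}$ for the \emph{explicit} comparison inverse $w^{\bar\beta\alpha}$ of \eqref{E:inverse}, at the cost of a controlled remainder. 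This is exactly what Lemma~\ref{L:identity} provides: $g^{\bar\beta\alpha}=w^{\bar\beta\alpha}+w^{\bar\beta\gamma}N_{\gamma\bar\delta}w^{\bar\delta\alpha}$ with $\norm{N}=O(\abs r^{n-3/2-\varepsilon})$.

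Substituting this and expanding $\varphi_\alpha=(n+1)(w_\alpha+u_\alpha)$ splits $\norm{\partial\varphi}_\omega^2/(n+1)^2$ into the principal term $w_\alpha w_{\bar\beta}w^{\bar\beta\alpha}$, the mixed terms carrying one or two factors of $u_\alpha$ or $u_{\bar\beta}$, and the terms containing the correction $N$. The principal term is known in closed form: by the identity recorded right after \eqref{E:inverse},
\begin{equation*}
w_\alpha w_{\bar\beta}w^{\bar\beta\alpha}=\frac{\abs{\partial r}^2}{\abs{\partial r}^2-r}.
\end{equation*}
The heart of the argument is then showing every remaining term is negligible, which is a matter of bookkeeping orders in $\abs r$. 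From \eqref{E:Levi_form_of_g} and \eqref{E:inverse} one has $w_\alpha=O(\abs r^{-1})$ and $w^{\bar\beta\alpha}=O(\abs r)$, so the raised gradient $w^\gamma=w^{\bar\beta\gamma}w_{\bar\beta}=O(1)$; from the Cheng--Yau estimate \eqref{E:boundary}, $u_\alpha=O(\abs r^{n-1/2-\varepsilon})$. Hence a mixed term such as $w_\alpha u_{\bar\beta}w^{\bar\beta\alpha}$ is $O(\abs r^{n-1/2-\varepsilon})$, while the $N$-term $w^\gamma N_{\gamma\bar\delta}w^{\bar\delta}$ is $O(\abs r^{n-3/2-\varepsilon})$; for $n\ge 2$ both exponents are positive, so every term other than the principal one tends to $0$. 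This yields
\begin{equation*}
\norm{\partial\varphi}_\omega^2=(n+1)^2\frac{\abs{\partial r}^2}{\abs{\partial r}^2-r}+o(1)\quad\text{as }r\to0.
\end{equation*}

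To finish, I would let $p\to\partial\Omega$, so $r(p)\to0^-$. Since $(r_{\alpha\bar\beta})>0$ on $\overline\Omega$ by condition (\romannumeral4) and $\partial r\ne0$ on $\partial\Omega$ by condition (\romannumeral3), the quantity $\abs{\partial r}^2=r^{\alpha\bar\beta}r_\alpha r_{\bar\beta}$ remains bounded and bounded away from $0$ near the boundary; therefore $\abs{\partial r}^2/(\abs{\partial r}^2-r)\to1$ and the error tends to $0$, giving the claimed limit $\norm{\partial\varphi}_\omega^2\to(n+1)^2$.

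The step I expect to be the main obstacle is not the final limit, which is immediate once the expansion is in hand, but the control of the difference between the genuine \KE metric and the Cheng--Yau comparison metric. This is precisely where Lemma~\ref{L:identity} and the boundary regularity of Theorem~\ref{T:CY2} (through \eqref{E:boundary}) are indispensable: without the bound $\norm{N}=O(\abs r^{n-3/2-\varepsilon})$ and the decay of $u_\alpha$, the error terms could not be peeled off from the principal term $w_\alpha w_{\bar\beta}w^{\bar\beta\alpha}$, whose explicit value is what pins the boundary limit to $(n+1)^2$. Everything else is routine power counting in $\abs r$ once these two inputs are granted.
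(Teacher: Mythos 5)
Your proposal is correct and follows essentially the same route as the paper: substitute $g^{\bar\beta\alpha}=w^{\bar\beta\alpha}+w^{\bar\beta\gamma}N_{\gamma\bar\delta}w^{\bar\delta\alpha}$ from Lemma~\ref{L:identity}, expand $\varphi_\alpha=(n+1)(w_\alpha+u_\alpha)$, isolate the principal term $w_\alpha w_{\bar\beta}w^{\bar\beta\alpha}=\abs{\partial r}^2/(\abs{\partial r}^2-r)$, and kill the remaining terms by power counting in $\abs r$ using the Cheng--Yau estimates \eqref{E:boundary}. Your explicit remarks that the surviving exponents are positive only for $n\ge 2$ and that $\abs{\partial r}^2$ stays bounded away from $0$ near $\partial\Omega$ are slightly more careful than the paper's write-up, but the argument is the same.
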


\begin{remark}
Every computation is easily generalized to a bounded strongly pseudoconvex domain with smooth boundary in a complex manifold by \cite{Coevering}.
Moreover, it is also localized. More precisely, near a strognly pseudoconvex boundary point of a bounded pseudoconex domain, one can obtain the same conclusion of this section
(\cite{Gontard}).
\end{remark}


\end{document}